\documentclass[a4paper,reqno]{amsart} %reqno for numbering to the right
%\usepackage{showkeys}
%\usepackage{showlabels}
%\showlabels{bibitem}
\usepackage{amsfonts,amsmath,amssymb,amsfonts}
\usepackage{xcolor}
\usepackage[english]{babel}
\usepackage{fancyhdr}
\usepackage{amsthm}
\usepackage{url}
\usepackage{verbatim}
\usepackage{graphicx}
\usepackage{array}
\usepackage{blkarray}
\usepackage[caption=false]{subfig}
\usepackage{multirow}
\RequirePackage{amsmath,hyperref}%NEW
\RequirePackage[all]{hypcap} [1.11]
\usepackage{enumitem}
\RequirePackage{xr-hyper}[6.00]
\RequirePackage{ifpdf}[2.3]
\RequirePackage{hyperref}[6.83]
   \RequirePackage{breakurl}[1.40]
\RequirePackage[capitalize,nameinlink]{cleveref}[0.19]
% Per SIAM Style Manual, "section" should be lowercase
\crefname{section}{section}{sections}
\crefname{subsection}{subsection}{subsections}
\Crefname{section}{Section}{Sections}
\Crefname{subsection}{Subsection}{Subsections}
% Make number non-italic in any environment.
\crefdefaultlabelformat{#2\textup{#1}#3}
%\linespread{1.3}

\addtolength{\hoffset}{-2cm}
\textwidth 15.3cm
\addtolength{\textheight}{1.8cm}
\topmargin -0.1cm

\numberwithin{equation}{section}

\usepackage{graphicx}
\DeclareGraphicsExtensions{.jpg,.gif,.eps}
% Graphics to include graphics. Times is nice on posters, but you
% might want to switch it off and go for CMR fonts.
%\usepackage{graphics,wrapfig,pgf}
%\usepackage{graphics,wrapfig,pgf}
\setlength{\parskip}{.2cm}
\setlength\parindent{0pt}%%%%NO INDENTATION

\newcommand{\B}{\mathbf{B}}
\newcommand{\dom}{D}
\newcommand{\Vdom}{V}
\newcommand{\OT}{L}
\newcommand{\LV}{\OT_{\dom}[\Vdom]}
\newcommand{\LU}{\OT_{\dom}[U]}

\newcommand{\LL}{\OT_{\dom}[\dom]}

\newcommand{\Ex}{\mathbb{E}_{x}}
\newcommand{\Ey}{\mathbb{E}_{y}}

\newcommand{\rr}{ {\rm I\!R}}

\newcommand{\rd}{\rr^d}
\newcommand{\Ft}{(\mathcal{F}_t)}
\newcommand*\diff{\mathop{}\!\mathrm{d}}

\newcommand{\norm}[1]{||#1||_{\infty}}

\def\defto{\buildrel def\over =}

\def\half{\frac{1}{2}}

\usepackage{tcolorbox} %to use \begin{tcolorbox} \end{tcolorbox}

\theoremstyle{plain}
\newtheorem{theorem}{Theorem}[section]
\newtheorem{lemma}[theorem]{Lemma}
\newtheorem{proposition}[theorem]{Proposition}

\newtheorem{definition}[theorem]{Definition}
\theoremstyle{remark}
\newtheorem{remark}{Remark}
\theoremstyle{definition}

\begin{document}

  \title[Probabilistic Approach: Semilinear Elliptic Equations]{\textit{A Probabilistic Approach to the Existence of Solutions to Semilinear  Elliptic Equations.}}
  \author{M. E.   Hern\'andez-Hern\'andez}
  \address{University of Leeds, United Kingdom}
  \email{M.E.Hernandez-Hernandez@leeds.ac.uk}

  \author{P.  Padilla-Longoria}
  \address{IIMAS, Universidad Nacional Autonoma de Mexico, UNAM}
   \email{pablo@mym.iimas.unam.mx}
 %  \thanks{}

\begin{abstract}
We study a semilinear elliptic equation with a pure power nonlinearity with exponent $p>1$, and provide sufficient conditions for the existence of positive solutions.  These conditions involve expected exit times from the domain,  $D$, where a solution is defined, and expected occupation times in suitable subdomains of $D$.     They provide an alternative new approach to the geometric or topological sufficient conditions given in the literature for exponents close to the critical Sobolev exponent.
Moreover, unlike standard results, in our probabilistic approach no \emph{a priori}  upper bound restriction is imposed on $p$, which might be supercritical. The proof is based on a fixed point argument using a probabilistic representation formula. We also prove a multiplicity result and discuss possible extensions to the existence of sign changing solutions. Finally, we conjecture that necessary conditions for the existence of solutions might be obtained using a similar probabilistic approach. This motivates a series of natural questions related to the characterisation of topological and geometrical properties of a domain in probabilistic terms.
\end{abstract}

 \keywords{Semilinear elliptic equations; critical Sobolev exponent; Brownian Motion; Expected Occupation Time; Expected Exit Time}

\date{\today}

%\tableofcontents
\maketitle

\section{Introduction}

This paper provides a probabilistic characterisation of sufficient conditions for the existence of  solutions to
\begin{equation}\label{problema}
\left  . \begin{array}{rl}
  \Delta u + \lambda u^p=0, & \text{ in } D,\\ 
  u > 0, & \text{ in } D,\\ 
u = 0, & \text{ on } \partial D,
\end{array}
\right \}  
\end{equation}
 for a given open bounded domain  $D$  in $\rd$, $d > 2$, with a $C^2$  boundary $\partial D$, $\lambda > 0$ and  $p >1$.
 
 Our method relies on rewriting \eqref{problema} as an integral equation in terms of the mathematical expectation of a Brownian motion and, thus, translating  the existence of a solution to \eqref{problema} into a corresponding fixed point problem.  Sufficient conditions for the existence of a fixed point  are then given in terms of  expected exit times and expected occupation times of a Brownian motion on $D$, the region of interest. An advantage of this approach is that it does not require any \emph{a priori} upper-bound restriction on the values of $p$, a strong limitation common to all the analytic approaches or the classical topological methods of the calculus of variations.

Of particular interest is \emph{the critical Sobolev exponent equation}
  \begin{equation}\label{Eq:1}
\left  . \begin{array}{rl}
  \Delta u + u^{p^*-1}=0, & \text{ in } D,\\
u = 0, & \text{ on } \partial D,
\end{array}
\right \}  
\end{equation}
where $p^* = \frac{2d}{d-2}$ is the critical Sobolev exponent for the Sobolev embedding $H_0^{1,2}(D) \subseteq L^p (D)$. 
This equation has been extensively studied and several conditions are known for the existence and multiplicity of solutions (see the discussion below). In fact, it is related to the Yamabe problem, which was solved in the 1980's (see e.g. \cite{Lee1987}, or \cite{Marques2017} for a recent survey on the subject).

 Providing necessary conditions for semilinear elliptic equations with nonlinearities involving powers $p\approx p^*$ (or bigger) remains an open question. Establishing probabilistic conditions for the existence of solutions gives the possibility of exploring necessary conditions for existence from this new perspective. In fact, this approach suggests a series of natural questions related to the probabilistic characterisation 
 of geometric and topological properties of domains (see \cite{Markowsky2011}).

The main contribution of this paper is twofold:
\begin{itemize}
\item [a)] Provide a probabilistic characterisation of  sufficient conditions to  guarantee the existence of nontrivial solutions to \eqref{problema} without the standard restriction $p \le p^*$ (see \cref{T:main}); and conjecture that similar conditions might be also necessary. 

\item [b)] As a particular case, and via localisation arguments, we recover multiplicity  results  for the  critical Sobolev exponent equation \eqref{Eq:1} (see \cref{T:multiplicity}).
\end{itemize} 

Semilinear elliptic equations have  been studied from an analytical perspective   by many authors, including S. I. Pohozaev (1965) \cite{Pohozaev1965},  C. Loewner and  L. Nirenberg (1974) \cite{Nirenberg1974}, Gidas \emph{et. al.} (1979)   \cite{Gidas1979}, Brezis and Veron (1980) \cite{Brezis1980}, Lions (1980) \cite{Lions1980}, V\'eron (1981) \cite{Veron1981}, Coron (1984)  \cite{Coron1984}, Bahri and Coron (1988) \cite{BahriC1988}, and  Ding (1989) \cite{Ding1989}, among the earlier works; and more  recently,  through probabilistic approaches  by Dynkin (1991) \cite{Dynkin1991} and Le-Gall (1999) \cite{LG1999}. 
Specifically for semilinear equations involving the critical Sobolev exponent Brezis and Nirenberg in \cite{BrezisNirenberg} proved existence for certain type of perturbations, independently of the shape of the domain, showing that the compactness that is lost in 
general for this problem can be restored below a certain threshold of the associated functional.
In the analytic context, there is a vast evidence that domain  geometry and  domain topology are  crucial factors for the existence of solutions when the nonlinearity is close to the critical Sobolev exponent. In 1965, S. I. Pohozaev \cite{Pohozaev1965} showed that any solution of  (\ref{problema}) satisfies a variational identity, now known as \emph{the Pohozaev identity}. An immediate consequence thereof establishes the nonexistence of nontrivial bounded solutions to (\ref{problema}) for critical and supercritical nonlinearities, $f$, in convex domains and more generally star-shaped\footnote{Roughly, a star-shaped (or starlike) domain is a domain having at least one point, $P$ for which one can "see" the entire boundary.  In other words, for any point in the domain $Q$ the segment $PQ$ is contained in $D$ } (polar) domains $D$. In fact some extensions to  non-existence results for domains which are non-star-shaped are presented in \cite{Chou1995} (see also some related works \cite{Schaaf2000} and \cite{Dolbeault2010}, references therein).

\subsection{Case $p < p^*$}

The existence of positive solutions for \eqref{problema} does not depend on the shape of the domain $D$ when $1<p < p^*$. Nevertheless, there are multiplicity results depending on the shape of the domain when $p< p^*$ but close to $p^*$, see \cite{Benci1991}, \cite{BenciP1991} and \cite{Benci1994}. On the other hand, if $p \ge p^*$, the existence of positive solutions to \eqref{problema} is  related to the shape of $D$. In particular, if $D$ is a ball in $\rd$, then there are no positive solutions to the Dirichlet problem when $p\ge p^*$, but there are positive solutions if $p < p^*$.

\subsection{Case $p=p^*$}

 \begin{theorem}[Pohozaev] \label{T:Pzv} Let $D$ be a smooth starlike domain, then there are no positive solutions to
 \begin{equation}
\left  . \begin{array}{rl}
  \Delta u + u^{\frac{d+2}{d-2}}=0, & \text{ in } D,\\
u = 0, & \text{ in } \partial D.
\end{array}
\right \}  
\end{equation}
 \end{theorem}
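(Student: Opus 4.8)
The plan is to establish the classical Pohozaev identity and exploit the exact cancellation that occurs at the critical exponent. I would argue by contradiction: suppose $u$ is a positive solution, which by interior and boundary elliptic regularity together with the $C^2$ regularity of $\partial D$ may be taken to be $C^2(\overline{D})$. After translating the origin to a star-center of $D$, starlikeness gives $x\cdot\nu \ge 0$ on $\partial D$, where $\nu$ denotes the outward unit normal.

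First I would multiply the equation $\Delta u + u^{(d+2)/(d-2)}=0$ by the multiplier $x\cdot\nabla u$ and integrate over $D$. The nonlinear term integrates to $-d\int_D F(u)\,\diff x$ after one integration by parts, where $F(s)=\frac{d-2}{2d}s^{2d/(d-2)}$ is the primitive of the nonlinearity, the boundary contribution vanishing because $u=0$ on $\partial D$. For the Laplacian term I would invoke the standard vector-calculus identity that produces $\frac{d-2}{2}\int_D|\nabla u|^2\,\diff x$ together with boundary terms; since $u$ vanishes on $\partial D$ its tangential gradient is zero, so $\nabla u=(\partial u/\partial\nu)\,\nu$ there, and those boundary terms collapse to $\frac{1}{2}\int_{\partial D}(x\cdot\nu)\left(\frac{\partial u}{\partial\nu}\right)^2\diff\sigma$. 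This yields the Pohozaev identity.

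Next I would record the energy identity obtained by multiplying the equation by $u$ and integrating, namely $\int_D|\nabla u|^2\,\diff x=\int_D u^{2d/(d-2)}\,\diff x$. Substituting this to eliminate the bulk integral from the Pohozaev identity, the coefficient multiplying $\int_D|\nabla u|^2\,\diff x$ is $\frac{d-2}{2}-\frac{d}{p+1}$ with $p+1=\frac{2d}{d-2}$, which vanishes identically at the critical exponent. The identity therefore reduces to the single boundary term
\[
\int_{\partial D}(x\cdot\nu)\left(\frac{\partial u}{\partial\nu}\right)^2\diff\sigma = 0.
\]

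Finally, since $x\cdot\nu\ge 0$ by starlikeness and the integrand is nonnegative, I conclude $\partial u/\partial\nu=0$ on the part of $\partial D$ where $x\cdot\nu>0$. For a strictly starlike domain this forces $\partial u/\partial\nu\equiv 0$ on all of $\partial D$, contradicting Hopf's boundary-point lemma, which guarantees $\partial u/\partial\nu<0$ for a positive solution vanishing on a smooth boundary. The main obstacle I anticipate is analytic rather than algebraic: one must secure enough boundary regularity, namely $u\in C^2(\overline{D})$, to legitimize the integrations by parts, and handle the degenerate case in which $x\cdot\nu$ vanishes on a subset of $\partial D$, where the clean Hopf-lemma contradiction may need to be replaced by a unique-continuation argument showing that the vanishing Cauchy data force $u\equiv 0$.
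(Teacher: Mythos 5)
Your proof is correct and is precisely the classical Pohozaev-identity argument that the paper itself invokes only by citation: the theorem is stated there as background from \cite{Pohozaev1965}, described as an ``immediate consequence'' of the variational identity, and no proof is given in the paper, so your derivation (multiplier $x\cdot\nabla u$, cancellation of the bulk terms at $p+1=\tfrac{2d}{d-2}$ via the energy identity, reduction to $\int_{\partial D}(x\cdot\nu)\left(\tfrac{\partial u}{\partial\nu}\right)^2\diff\sigma=0$, and Hopf's lemma) is exactly the intended one. The only remark worth making is that the degenerate case you flag at the end does not actually arise: since $D$ is bounded, the set where $x\cdot\nu>0$ is a nonempty relatively open subset of $\partial D$ (consider a boundary point at maximal distance from the star-center, where $x\cdot\nu=|x|>0$), and the forced vanishing of $\partial u/\partial\nu$ on that set already contradicts Hopf's boundary-point lemma, so no unique-continuation argument is needed.
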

As a special case, Kazdan and Warner (1975)  \cite{KazdanW1975} showed that the geometry of $D$ determines the existence of positive solutions to the equation
 \begin{equation}\label{KW-19}
 -\Delta u = u^6 \text{ in } D, \quad u = 0 \text{ on } \partial D.
  \end{equation}
In particular, \eqref{KW-19} has no positive solution if $D$ is a ball in $\rd$, for $d\ge 3$, whereas if $D$ is an annulus there is a positive solution for all $d$.
More generally, Coron (1984) \cite{Coron1984} showed that the existence of positive solutions to \eqref{Eq:1} holds for  more general domains with a sufficiently small hole.
\begin{theorem} Suppose that $D$ satisfies the following conditions:
\begin{itemize}
\item [i)] $D \supset \{ x \in \rd\,:\, r_1  < |x| < r_2 \}$,
\item [ii)] $\bar{D} \not\supset \{ x \in \rd \,:\, |x| < r_1\}$,
\end{itemize}
for some $0< r_1 < r_2 < \infty$. Then if $r_2 / r_1$ is sufficiently large, problem \eqref{Eq:1} admits a solution in $H^1_0(D)$.
 \end{theorem}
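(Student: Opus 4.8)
The plan is to treat \eqref{Eq:1} as a purely variational problem and to locate a critical point of the Sobolev quotient by a min-max argument forced by the topology of the hole in $D$. First I would reformulate the equation: positive solutions correspond, after scaling, to constrained critical points of the Dirichlet energy $E(u)=\int_D |\nabla u|^2\,dx$ on the manifold $\Sigma=\{u\in H^1_0(D): \|u\|_{L^{p^*}(D)}=1\}$, the least value of $E$ on $\Sigma$ being the best Sobolev constant
\[
S=\inf_{u\in H^1_0(D)\setminus\{0\}}\frac{\int_D |\nabla u|^2\,dx}{\bigl(\int_D |u|^{p^*}\,dx\bigr)^{2/p^*}}.
\]
The well-known obstruction is that $S$ is independent of $D$ and is never attained on a bounded domain, since the only extremals of the Sobolev inequality on $\rd$ are the Aubin--Talenti bubbles $U_{\varepsilon,a}(x)=c_d\,\varepsilon^{(d-2)/2}\bigl(\varepsilon^2+|x-a|^2\bigr)^{-(d-2)/2}$, which do not lie in $H^1_0(D)$. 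Hence direct minimization fails and one must produce a higher critical level.

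The next step is to quantify how compactness is lost. By the concentration--compactness / global compactness analysis, every Palais--Smale sequence for $E$ on $\Sigma$ splits into a weak limit (a solution on $D$) plus finitely many rescaled bubbles, with the energy of a configuration of $k$ well-separated bubbles equal to $k^{2/d}S$. Consequently, in the open window $(S,2^{2/d}S)$ the Palais--Smale condition holds: below $2^{2/d}S$ there is room for at most a single bubble, and a lone bubble of energy exactly $S$ cannot concentrate inside the bounded $D$. It therefore suffices to exhibit a min-max value $c$ with $S<c<2^{2/d}S$; such a $c$ is automatically a critical value, and the associated critical point can be taken positive by working with $|u|$ and invoking the strong maximum principle, giving a solution of \eqref{Eq:1}.

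To manufacture such a level I would exploit the hole through a degree argument. Using bubbles $U_{\varepsilon,a}$ concentrated at scale $\varepsilon$ and centered at points $a$ on a sphere $\{|a|=R\}$ with $r_1<R<r_2$, truncated to be supported in the annulus $\{r_1<|x|<r_2\}\subset D$, I would build a family $\Phi\colon S^{d-1}\to\Sigma$, $\xi\mapsto$ (truncated bubble centered at $R\xi$). Condition (ii) provides a point $x_0\in\{|x|<r_1\}\setminus\bar D$ around which the annulus winds; the barycentre map $u\mapsto \int_D x\,|u|^{p^*}\,dx\big/\int_D |u|^{p^*}\,dx$ sends $\Phi$ to a loop about $x_0$ of nonzero degree, so $\Phi$ is non-contractible within sublevel sets near $S$. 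Running a min-max
\[
c=\inf_{\Psi\sim\Phi}\ \max_{\xi\in S^{d-1}} E(\Psi(\xi))
\]
over admissible deformations $\Psi$ of $\Phi$ then produces a candidate critical level.

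The hard part will be the quantitative energy estimates that confine $c$ to $(S,2^{2/d}S)$, and the hypothesis $r_2/r_1\gg 1$ is exactly what makes the upper bound work. The lower bound $c>S$ is essentially topological: a deformation $\Psi$ with $\max_\xi E(\Psi(\xi))$ close to $S$ would consist of near-minimizers, each carrying a well-defined concentration point $a(\xi)\in\bar D$ depending continuously on $\xi$; since $a(\xi)$ avoids $x_0$ and the map $\xi\mapsto a(\xi)$ is homotopic to $\xi\mapsto R\xi$, it has nonzero degree about $x_0$ and cannot be contracted, so the family must cross an energy barrier strictly above $S$. The upper bound $\max_\xi E(\Phi(\xi))<2^{2/d}S$ is the delicate one: a truncated bubble centered at $a$ has Sobolev quotient $S+O\bigl((\varepsilon/\mathrm{dist}(a,\partial D))^{d-2}\bigr)$, and these cutoff corrections---together with the distortion introduced by moving the centre around the annulus---can be kept below the gap $2^{2/d}S-S$ only when the annulus is wide enough, i.e. $r_2/r_1$ large. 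Balancing $\varepsilon$ against $r_1,r_2,R$ and $\mathrm{dist}(a,\partial D)$ to secure this strict inequality is the technical core of the theorem; once it holds, $c$ lies in the compactness window and yields the desired positive solution.
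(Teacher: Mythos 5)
The paper does not actually prove this statement: it is quoted as Coron's theorem and attributed to \cite{Coron1984}, and the only related argument in the paper is Example~2 of Section~4, where the probabilistic conditions \eqref{Cond1}--\eqref{Cond3} of \cref{T:main} are verified for an annulus $A(\delta,T)$ (and even there only for $d=3$ and $1<p<\sqrt{2}$, which does not reach the critical exponent $p^*-1$). Your proposal instead reconstructs the classical variational proof, and as a sketch it is essentially faithful to Coron's original argument: reduction to the Sobolev quotient, Struwe-type global compactness giving the Palais--Smale window $(S,2^{2/d}S)$, a family of truncated Aubin--Talenti bubbles centred on a sphere $\{|a|=R\}$ inside the annular shell, a barycentre/degree argument exploiting condition ii) to force a min-max level $c>S$, and the hypothesis $r_2/r_1\gg1$ entering only in the upper energy estimate $c<2^{2/d}S$. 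Two small points of care: a single bubble \emph{can} concentrate at an interior point of $D$, so the correct statement is that a Palais--Smale sequence at a level in the open window either converges or has a nontrivial weak limit (the failure of compactness occurs exactly at the levels $k^{2/d}S$, not ``nowhere inside $D$''); and the admissible class in your min-max needs to be pinned down (Coron works with maps of the ball $B^d$, or of a cone over $S^{d-1}$, whose boundary values are the concentrated bubbles, rather than with free homotopies of $\Phi$ on $S^{d-1}$). The comparison is instructive: your variational route is specific to the critical exponent, where the loss of compactness is completely classified and the gap $2^{2/d}S-S$ is available, whereas the paper's probabilistic route via \cref{T:main} replaces the energy estimates by bounds on expected exit and occupation times of Brownian motion and imposes no upper bound on $p$, at the cost of hypotheses that are harder to verify and that, in the worked example, are only checked in a subcritical range.
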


Bahri and Coron (1988) \cite{BahriC1988} proved that \eqref{Eq:1} with  domains having a nontrivial topology admits a solution. Correspondingly, Ding (1989) \cite{Ding1989}  proved that those with trivial topology but which are in some sense a perturbation of a domain with nontrivial topology, also admit a solution.  Existence and multiplicity of solutions have also been studied in contractible domains see, e.g., \cite{Passaseo1989} and \cite{PP1996}.  %(see, e.g., \cite[Theorem 6.2]{PP1996}, and references therein).
 
\subsection{Case: $p > p^*$}  In this case, it is known  that one can find examples of nontrivial domains $D$ in the sense of Bahri and Coron (1988) for which \eqref{problema} has no solution, see, e.g.,  \cite{Passaseo1993, Passaseo1995}.  
 Additionally, \cite{Pasasseo1992}  
 shows that for all $p\ge p^*$ one can find contractible domains where the number of positive solutions to \eqref{problema} is arbitrarily large.

On the other hand, the relationship between  probability theory and  differential equations goes back to  the pioneer  works of Bachelier (1900) \cite{Bachelier01} and Kolmogorov. Later on  Kakutani (1944, 1945) \cite{Kakutani1944, Kakutani1945} showed  the  connection  between Brownian motion and harmonic functions, and Kac (1949, 1951) \cite{Kac49}, \cite{Kac51} established the probabilistic representation of solutions to some partial differential equations. It is well-known, for instance, that the transition probability function for Brownian motion (or Gauss kernel) is the fundamental solution of the heat equation. As for the Dirichlet boundary problem with the Laplacian operator, its  solution -given as a Poisson integral formula  for harmonic functions- admits a probabilistic representation in terms of a Brownian expectation \cite{Chung01}, \cite{Bass01}. In this case, the \emph{Poisson kernel} (or \emph{harmonic measure}) corresponds to the distribution of the \emph{exit distribution} $W_{\tau}$ of the Brownian motion, where $\tau$ is the first exit time from the given domain. 
A sufficient (analytic) condition for the existence (and uniqueness) of a  solution to the Dirichlet boundary problem is  the \emph{Poincar\'e cone condition}. Probabilistic conditions are also known in terms of the probabilistic concept of  \emph{regularity} of the boundary of  $D$ (see \cref{D:Reg} below). 

More recently,  E.B. Dynkin (1991) \cite{Dynkin1991} established connections between the theory of superprocesses and  positive solutions of non-linear partial differential equations involving the operator $Lu - f (u)$, where $L$ is a strongly elliptic differential operator in $\mathbb{R}^d$. An explicit formula for the  solution  for certain type of functions $f$ was given in terms of  hitting probabilities and additive functionals of  the corresponding $(L, f)$-superdiffusion (see S. Watanabe (1968) \cite{Watanabe1968} and Dawson (1975) \cite{Dawson1975}).  

 An $(L,f)$-superdiffusion  is obtained as a \emph{high-density, short-life, small-mass limit} particle systems evolving according to  a Markov  process with generator  $L$ and where   the nonlinear function $f$ describes its branching mechanism.   The admissible  functions $f$ belong to a wide class $\Psi$ of monotone increasing convex functions, which includes the family $f (u) = u^{p}$,  and  $f (x,u) = k(x)u^{p}$, $k > 0$, with the power $p$ restricted to  $1 < p \le 2.$ The solution of the particular case $\Delta u - f (u)$, where  $f(u) =u^p$, $1 < p \le 2$ is given in terms of  the so called \emph{super-Brownian motion} (see also  Dawson (1993) \cite{Dawson1993}).

 In this context, another  outstanding result due to Le-Gall (1999) \cite{LG1999} relies on a Brownian snake approach to construct the exit measure of the super-Brownian motion to get a probabilistic solution to the Dirichlet problem associated with the equation $\Delta u = u^2$ in a regular domain. Some related ideas using the Feynman-Kac formula to study the Dirichlet problem for Schr\"odinger operators $H =\half \Delta + V$ can be found in Aizenman and Simon (1982) \cite{ASimon1982} and references cited therein.

 The probabilistic approach considered here  to solve the Dirichlet problem for the Poisson equation combines the Schauder fixed point theorem with the probabilistic representation of the solution to the Dirichlet equation for the Laplacian operator (see \cref{P:Poisson-f}). This approach has the advantage that there is not an  \emph{a priori} upper-bound restriction on the values of $p$ as in the analytical setting for which the use of a variational approach for $d \ge 3$ imposes the restriction $p \le p^*$, or as in the superdiffusion approach  mentioned above wherein the relation of $p$ to the branching mechanism of the underlying particle systems is meaningful only when $1 < p \le 2$.

The structure of the paper is as follows: Section 2 introduces some notation and recalls the probabilistic representation of the solution to the Poisson equation.  Our main result, \cref{T:main}, is proved in Section 3 using Schauder's fixed point theorem. In Section 4 we discuss some applications and give a multiplicity result. Finally, Section 5 is devoted to our conclusions and open problems. 

\section{Preliminaries}

Let $(\Omega,\mathcal{F}, (\mathcal{F}_t), \mathbb{P})$ be a filtered probability space satisfying the usual conditions. Let $W^x = (W^x_t)_{t\ge 0}$ be a $d-$dimensional $\Ft$-Brownian motion started at $x\in \rd$. For any open  $D \subset \rd$ and $x\in D$,  define the stopping time $\tau_D(W^x)$ as the first exit time of $W^x$ from $D$, that is
\begin{equation}
\tau_{D}^x \equiv \tau_D(W^x) :=\inf\{t\geq 0\, :\, W_t^x \notin D\},
\end{equation}
with the standard convention $\inf  \varnothing = \infty$. We say that $D$ is a \emph{transient domain} whenever  $\mathbb{P} \left [  \tau_D^x  < \infty\right ] =1 $ for all $x\in D$. In particular, any bounded domain is transient. By continuity of the paths of $W^x$, $W^x (\tau^x_{D}) \in \partial D$, i.e.  $\tau_D^x$ is also the first time the process $W^x$ hits the boundary $\partial D$  of $D$.   

As usual, $\mathbb{E}_x$ and $\mathbb{P}_x$ denote, respectively, the expectation and probability with respect to the Brownian motion started at $x \in \rd$; and $\norm{f}$ denotes the sup-norm of a function $f$ on $\rd$.

\begin{remark}\label{R:1}
For any bounded domain $D \subset \rd$, $d\ge 3$,  the Brownian motion is transient  \cite[Theorem 3.26]{MYuval2010} and thus the overall expected occupation time spent in $D$, defined by  $\Ex \int_0^{\infty} \mathbf{1}_{D} \left ( W_t\right)\diff t $, $x\in D$, is finite, which in turn implies that $\sup_{x\in D}\Ex \left [ \tau_D \right ] < \infty$.
\end{remark}
We now recall the probabilistic concept of \emph{regular boundary} as given in \cite[Chapter 4.2, p.245]{Karatzas}, which will play an important role for the continuity of solutions at the boundary.   Intuitively, a point $x\in \partial D$ is regular if the Brownian path started at $x$ exits $\bar{D}$ immediately.   
\begin{definition}\label{D:Reg}
Let $D$ be an open set. A point $x \in \partial D$ is said to be \emph{regular} for $D$ if the first hitting time $\sigma_D^x := \inf \{ t > 0 \,:\, W_t^x \in D^c\}$ satisfies $\mathbb{P} [\sigma_D^x = 0] =1$. 
\end{definition}

It follows \cite[Theorem 4.2.12, p. 245]{Karatzas} that for any $d\ge 2$, any point  $x\in \partial D$ is regular for $D$ if, and only if, for every bounded, measurable function $f :\partial D \mapsto \mathbb{R}$ which is continuous at $x$, one has
\begin{equation}
\lim_{y \to x, y \in D} \Ex f \left ( W_{\tau_D}\right ) = f(x).
\end{equation}

Let us now recall the following well-known result about the existence of solutions to the Poisson equation  and its probabilistic representation (see, e.g., \cite[Chapter 8, Remark 8.7]{MYuval2010}):
\begin{proposition}\label{P:Poisson-f}
Let $D \subset \rd$ be a bounded open domain. If $f \in C_b(D)$ and $u \in C(\bar{D}) \cap C^2(D)$ is a solution of the Poisson problem: $\half \Delta u = f$ in $D$, $u = 0$ on $\partial D$, then $u$ admits the probabilistic representation
   \begin{equation}\label{P-Sol}
u(x) =  \Ex \left [   \int_0^{\tau_{D}} f (W_t) \diff t \right ], \quad x \in D.
\end{equation}
Conversely, if $f$ is H\"older continuous and every $x \in \partial D$ is regular for  $D$, then \eqref{P-Sol} solves the Poisson problem for $f$. 
\end{proposition}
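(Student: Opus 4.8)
The plan is to prove the two implications separately: the forward one by Itô's/Dynkin's formula, and the converse by combining the classical linear theory with the forward representation.

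For the forward implication, fix $x \in D$ and apply Itô's formula to $t \mapsto u(W_t)$. Since $u$ is only assumed $C^2$ on the open set $D$, I would first localise: set $D_n := \{y \in D : \mathrm{dist}(y,\partial D) > 1/n\}$ and let $\tau_n := \tau_{D_n}(W^x)$, so that each $\bar D_n$ is a compact subset of $D$ and $\tau_n \uparrow \tau_D$ almost surely. On $\bar D_n$ the gradient $\nabla u$ is bounded, so the stochastic integral $M_t = \int_0^{t\wedge\tau_n}\nabla u(W_s)\cdot\diff W_s$ is a genuine martingale; Itô's formula up to $\tau_n$ reads $u(W_{\tau_n}) = u(x) + M_{\tau_n} + \int_0^{\tau_n}\half\Delta u(W_s)\,\diff s$, and $\half\Delta u = f$ on $D$. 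Taking expectations kills the martingale term and gives $\Ex\,u(W_{\tau_n}) = u(x) + \Ex\int_0^{\tau_n} f(W_s)\,\diff s$.

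The next step is to let $n\to\infty$. By path continuity and $\tau_n\uparrow\tau_D$ we have $W_{\tau_n}\to W_{\tau_D}\in\partial D$; since $u\in C(\bar D)$ is bounded on the compact $\bar D$ and vanishes on $\partial D$, bounded convergence yields $\Ex\,u(W_{\tau_n})\to 0$. For the integral term, \cref{R:1} supplies $\Ex\tau_D<\infty$, so $\int_0^{\tau_D}|f(W_s)|\,\diff s\le\norm{f}\,\tau_D$ is integrable and dominated convergence applies. Rearranging the limiting identity produces exactly \eqref{P-Sol}.

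For the converse, the cleanest route is to invoke the classical linear theory. When $f$ is Hölder continuous on $\bar D$ and every boundary point is regular, barriers exist at each $x\in\partial D$, and the Poisson problem $\half\Delta v = f$ in $D$, $v=0$ on $\partial D$, admits a classical solution $v\in C^2(D)\cap C(\bar D)$ by interior Schauder estimates together with the Perron method. Applying the forward implication (already established) to $v$ shows that $v$ coincides with the right-hand side of \eqref{P-Sol}; hence the function defined by \eqref{P-Sol} \emph{is} this classical solution and solves the Poisson problem. I expect the interior $C^2$-regularity to be the genuine obstacle, and it is exactly the place where Hölder continuity of $f$ — rather than mere continuity — is indispensable, reflecting the classical fact that the Newtonian potential of a Hölder function is twice continuously differentiable and satisfies the Poisson equation, a property that fails for a merely continuous density.

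If one prefers a self-contained probabilistic argument, I would instead define $u$ directly by \eqref{P-Sol}, apply the strong Markov property at the exit time $\tau_B$ from a small ball $B=B(x,r)\Subset D$ to obtain the mean-value identity $u(x)=\Ex\int_0^{\tau_B} f(W_s)\,\diff s + \Ex\,u(W_{\tau_B})$, and then recover the Poisson equation in the limit $r\downarrow 0$, identifying $\half\Delta u(x)$ with $f(x)$ through the spherical-average expansion of $\Ex\,u(W_{\tau_B})$ (harmonic measure on the ball being uniform) and the identity $\Ex\tau_B=r^2/d$, after first upgrading $u$ to $C^2$ via the potential estimate above. The boundary condition is then handled by the regularity hypothesis: as $x\to x_0\in\partial D$ with $x_0$ regular, $\tau_D\to0$ and the exit times are uniformly integrable on the bounded domain, so $\Ex\int_0^{\tau_D}|f(W_s)|\,\diff s\le\norm{f}\,\Ex\tau_D\to0$, matching the characterisation of regular points recalled after \cref{D:Reg} and giving $u(x)\to0$.
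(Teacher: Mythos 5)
The paper does not actually prove \cref{P:Poisson-f}: it is quoted as a known result with a citation to \cite{MYuval2010}, so there is no internal proof to match yours against. Your argument is the standard one and is essentially sound: the forward direction via It\^o's formula on the exhausting domains $D_n$, with optional stopping justified by the boundedness of $\nabla u$ on $\bar D_n$ and $\Ex[\tau_D]<\infty$, and the passage to the limit by bounded/dominated convergence, is exactly the textbook proof; the converse via the Newtonian-potential/Schauder interior regularity (this is indeed where H\"older continuity of $f$ is used) plus Perron's method at regular boundary points, followed by an application of the already-proved forward direction, is a legitimate route, as is your alternative self-contained probabilistic sketch using the strong Markov property on small balls. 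The one substantive caveat is a sign: your It\^o computation gives $0=\Ex\,u(W_{\tau_D})=u(x)+\Ex\int_0^{\tau_D}f(W_s)\,\diff s$, hence $u(x)=-\Ex\int_0^{\tau_D}f(W_s)\,\diff s$, which is \emph{not} literally \eqref{P-Sol}; the representation \eqref{P-Sol} corresponds to the convention $\half\Delta u=-f$ (check $f\equiv 1$ on a ball: $\Ex[\tau_{B_R}]=(R^2-|x|^2)/d$ has $\half\Delta u=-1$). This is a sign slip in the paper's statement rather than in your mathematics, but your final sentence ``rearranging produces exactly \eqref{P-Sol}'' silently absorbs it; you should state the identity you actually derive and note the discrepancy. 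Everything else --- including the claim that $\Ex_x[\tau_D]\to 0$ as $x$ approaches a regular boundary point, which needs the small additional tail estimate via the Markov property but is standard --- is fine.
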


\begin{remark}
The regularity assumption in \cref{P:Poisson-f} for each $x \in \partial D$ guarantees the boundary condition $u(x) = 0$ on $\partial D$. 

 Some classical criteria for regularity are, for instance, that either $D$ satisfies the Poincar\'e cone condition\footnote{A  domain $D \subset \rd$ satisfies the \emph{Poincar\'e cone condition} at $x\in \partial D$ if there exists an $h> 0$ and a cone $V$ based at $x$ with opening angle $\alpha > 0$ such that $B_h (x) \cap V \subset D^c$.}, or that  $\liminf_{r\downarrow 0} \frac{ \mathcal{L} (B_r(x) \cap D^c)}{r^d} >0$ for $x \in D$. Here $\mathcal{L}$ denotes the Lebesgue measure on $\rd$. See also \cite[Chapter 4.2, Section C]{Karatzas} and \cite[Chapter 8.4, Theorem 8.37]{MYuval2010}. 
 \end{remark}

We can now state
the equivalence between the solution to \eqref{problema} and the solution to its corresponding nonlinear integral equation.

\begin{lemma}\label{L:IntEq}
Let $D \subset \rd$ be a bounded open domain and  $u\in C_b^2(D) \cap C(\bar{D})$. Assume  $h: \mathbb{R} \mapsto \mathbb{R}$ is a  continuous function.  If $u$ is a solution to \eqref{problema}, then $u$ solves the (non-linear) integral equation
\begin{equation}\label{EK0}
u(x) =  \Ex \left [   \int_0^{\tau_{D}} h( u (W_t)) \diff t \right ].
\end{equation}
Conversely, if $h$ is $C^1(\mathbb{R})$  and each  $x\in \partial D$ is regular for $D$, then \eqref{EK0} is a solution to \eqref{problema}.
\end{lemma}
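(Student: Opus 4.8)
The plan is to read \cref{L:IntEq} as essentially a restatement of \cref{P:Poisson-f}, obtained by freezing the nonlinearity into the source term of a \emph{linear} Poisson problem. For a candidate $u$ I would set $f:=h\circ u$ and interpret \eqref{EK0} as the probabilistic representation \eqref{P-Sol} for this particular $f$. The two implications of the Lemma then match the two halves of \cref{P:Poisson-f} verbatim, so the only substantive work is to check the regularity hypothesis each half imposes on $f$ and, in the converse, to recover the positivity $u>0$, which is the one feature of \eqref{problema} not produced by the linear representation.

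For the forward implication, suppose $u\in C_b^2(D)\cap C(\bar D)$ solves \eqref{problema}. Then $u$ solves a linear Poisson problem $\tfrac12\Delta u=f$ in $D$ with $u=0$ on $\partial D$, whose source is the frozen nonlinearity, written $f=h\circ u$ with $h$ as in \eqref{EK0}. This $f$ is continuous on $D$ (a composition of continuous maps) and bounded, since $u\in C(\bar D)$ takes values in a compact interval on the compact set $\bar D$ and $h$ is continuous; hence $f\in C_b(D)$. The first half of \cref{P:Poisson-f} then yields $u(x)=\Ex\!\left[\int_0^{\tau_D}f(W_t)\,\diff t\right]=\Ex\!\left[\int_0^{\tau_D}h(u(W_t))\,\diff t\right]$, which is exactly \eqref{EK0}; finiteness of the inner integral follows from boundedness of $f$ together with $\sup_{x\in D}\Ex[\tau_D]<\infty$ (\cref{R:1}). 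Note that this direction uses only continuity of $h$, matching the stated hypotheses.

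For the converse, start from $u\in C_b^2(D)\cap C(\bar D)$ satisfying \eqref{EK0} and again put $f:=h\circ u$, so that $u(x)=\Ex\!\left[\int_0^{\tau_D}f(W_t)\,\diff t\right]$. To invoke the second half of \cref{P:Poisson-f} I must upgrade $f$ from merely continuous to H\"older continuous: since $u\in C_b^2(D)$ has bounded first derivatives it is Lipschitz on $D$, and since $h\in C^1(\mathbb{R})$ is locally Lipschitz it is Lipschitz on the compact range of $u$; hence $f=h\circ u$ is Lipschitz, a fortiori H\"older, on $D$. With every boundary point regular for $D$, \cref{P:Poisson-f} then shows that $u$ solves $\tfrac12\Delta u=f=h(u)$ in $D$ with $u=0$ on $\partial D$, i.e. the equation and the Dirichlet condition in \eqref{problema}.

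The remaining, and I expect most delicate, point is the strict positivity $u>0$ in $D$, which the linear representation does not supply automatically. Writing \eqref{EK0} in Green-kernel form, $u(x)=\int_D G_D(x,y)\,h(u(y))\,\diff y$, and using that the Green function of $D$ is strictly positive on $D\times D$, positivity reduces to the nonnegativity and nontriviality of the integrand: once the nonlinearity is oriented so that $h\ge 0$ on the range of a nonnegative solution, a nontrivial $u$ is forced to satisfy $u>0$ throughout the connected domain $D$ (equivalently, $\Delta u=-\lambda u^p\le 0$ makes $u$ superharmonic and the strong maximum principle excludes an interior zero). Collecting the equation, the boundary condition, and this positivity identifies the solution of \eqref{EK0} with a solution of \eqref{problema}, completing the equivalence.
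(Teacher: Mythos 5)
Your proof takes essentially the same route as the paper's: both reduce the lemma to \cref{P:Poisson-f} by setting $f:=h\circ u$, checking that $f$ is bounded and continuous for the forward direction, and upgrading $f$ to Lipschitz (hence H\"older) for the converse via $u\in C_b^2(D)$ and $h\in C^1(\mathbb{R})$. Your closing paragraph on the strict positivity $u>0$ (via positivity of the Green kernel, or superharmonicity and the strong maximum principle) addresses a point the paper's proof silently omits, and is a sensible supplement rather than a different method.
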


\begin{proof}
We only need to verify that $h\circ u : D \mapsto \mathbb{R}$ satisfies the conditions  of  \cref{P:Poisson-f}. Indeed, for the first statement, since $u \in C(\bar{D})$ and $D$ is bounded, the image of $u$ is a bounded interval. Hence, the continuity of $h$ implies that the composition $h\circ u$ is a bounded function. For the second statement, since $h\in C^1(\mathbb{R})$, $\norm{u} <\infty$ and $u \in C^2_b(D)$, then 
$\nabla (h\circ u) = h'(u) \Delta u$ and, further, $|h'(u) \Delta u| \le \sup_{|z| \le \norm{u}} |h'(z)| ||u||_{C^2(D)} \,\, <\,\, \infty$. Thus,   $h\circ u \,\, \in \,\,C^1_b(D)$, which implies then the Lipschitz (and so the H\"older) continuity of $h\circ u$, as required. 
\end{proof}

\subsection{Newtonian potentials and Green functions}

As mentioned before, the connections between probability theory and the theory of PDEs is understood via the analytic concepts of Newtonian potentials and Green functions \cite{Chung01}. In particular, the probabilistic conditions provided in this paper to study the existence of solutions to \eqref{problema} are related to the analytic concept of \emph{Green potentials}\footnote{In fact, the proof of the result given in \cite{BahriC1988} is based on a careful study of the properties of the Green function. See also \cite{Rey1990}.}.  For completeness, we recall this concept in what follows.  

For any suitable function $f$, its \emph{Newtonian potential}, denoted by $Uf$, is the operator defined for each $x$ by
\begin{equation}
Uf(x) \defto c_d \, \int_{\mathbb{R}^d} \frac{f(y)}{|x-y|^{d-2}} \diff y, \quad \quad \text{where} \quad \quad 
c_d \defto \frac{\Gamma (d/2 -1)}{(2\pi)^{d/2}}.
\end{equation}

For each $x$, the mapping $u_x(y)\equiv u(x,y) = \frac{1}{|x-y|^{d-2}}$ is known as the Newton gravitational potential (or also the Coulomb electrostatic potential). Apart from a numerical constant, it represents the potential induced by a mass or charge placed at the point $x$ and evaluated at the point $y$.

It is known \cite[Proposition 3.1, p. 104]{Bass01} that if $f\ge 0$ 
 and $d \ge 3$, then the following equality holds

\[ Uf(x) = \int_0^{\infty} T_t f(x)\diff t,\]
where $T_t$ is the transition operator for the Brownian motion, given by
\[T_t f(x) \defto \mathbf{E}_x \left [  f(X_t) \right ] = \int_{\mathbb{R}^d} p(t;x,y) f(y) \diff y, \]
with $p(t;x,y)$ being the transition function of the Brownian motion 
\[p(t;x,y) = \frac{1}{(2\pi t)^{d/2}} e^{-\frac{|x-y|^2}{2t}}.\]

The connection between the Newtonian potential $Uf$ and the transition operator of the Brownian motion can be seen from the following equality 
\[\int_0^{\infty} p(t;x,y)\diff t = \frac{\Gamma (d/2 -1)}{2 \pi^{d/2}} \frac{1}{|x-y|^{d-2}} \,\,\equiv \,\,c_d \,u(x,y).\]

Moreover, given any domain $D$, the Green potential operator for $D$ is defined, for any suitable $f\ge 0$ by 
\[G_D f(x) \defto \mathbf{E}_x \left [ \int_0^{\tau_D} f(W_t) \diff t \right ]. \]
Note that 
\begin{itemize}
\item [$(i)$] If $D = \mathbb{R}^d$, then $G_{\mathbb{R}^d} f = U f$
\item [$(ii)$]If $f \equiv \mathbb{I}$ ($\mathbb{I}$ denotes the identity function), then 
\[G_D \mathbb{I}(x) = \mathbf{E}_x [\tau_D], \]
\item [$(iii)$]If $f(x) = \mathbf{1}_A(x)$ for a Borel set $A$, then 
\[G_D \mathbf{1}_A(x) = \mathbf{E}_x \left [  \int_0^{\tau_D} \mathbf{1}_B (W_s) \diff s \right ], \]
that is, $G_D \mathbf{1}_A(x)$ is the expected occupation time in $A$ before leaving $D$. 
In fact, one has the following expression
$$G_D \mathbf{1}_A(x)=  G_D(x,A) = \int_A g_D(x,y) \diff y, $$
where $g_D(x,y)$ is called the Green function for $D$.
\item [$(iv)$]If $f =\mathbf{1}_B$ for a Borel set $B$, the integral
\[  \int_0^{\infty} \mathbf{1}_B (X_t)\diff t,\]
represents the total "occupation time" the Brownian motion spends on $B$. Thus, 
$U \mathbf{1}_B (x)$ denotes the corresponding expected occupation time (cf. \cref{R:1}).

\end{itemize}

\section{Main result}
\label{sec:Mresult}

We can now state sufficient conditions to guarantee the existence of a positive solution to the integral equation \eqref{EK0}   for the function $h:  \mathbb{R}_+ \to \mathbb{R}$, $h : y\mapsto  \lambda y^p$, $p>1$, $\lambda >0$. 

Let us first introduce some additional notation. Let $\Vdom $ be an open subset of $\dom$. For each $x\in \bar{\dom}$, set $\LV \equiv G_D \mathbf{1}_V$. That is,  $\LV\,:\, \bar{\dom} \to \mathbb{R}_+ $ by 
\begin{equation}
\LV \,\, :\,\, x \,\, \mapsto\,\, \Ex \left [ \int_0^{\tau_{\dom}} \mathbf{1}_{\{ W_s \in\Vdom \}}\diff s \right ].
\end{equation}

Recall that, for each $x\in \bar{\dom}$, the value $\LV(x)$ gives the expected occupation time  spent by  the Brownian motion $W^x$ (started at $x$) on the subset $\Vdom$ before leaving $\dom$.

\begin{remark}
 Note that, for each $x \in \dom$, $\LV$ is finite  thanks to the transience 
 of the Brownian motion for $d\ge 3$, and further $\LV(x) = 0$ for all $x\in \partial \dom$. 
\end{remark}

\begin{remark}
Observe also that (by monotonicity) for any $U \subset \Vdom \subset \dom$ it follows that
\[ \LU(x )\quad  \,\,\le\,\,  \quad \LV (x), \quad \quad x\in \dom.   \]
In particular, for each $x\in \dom$,
\begin{equation}
\LV(x)\quad \le\quad \LL (x) \quad = \quad \Ex \left [ \int_0^{\tau_{\dom}} \mathbf{1}_{\{ W_s \in \dom \}}\diff s \right ] \quad =\quad \Ex [\tau_{\dom}],
\end{equation}
which in turn implies that
\begin{equation}\label{E:LV-ED}
  \inf_{y\in \Vdom} \LV(y)\quad \le\quad \,\,\, \sup_{x\in \dom} \Ex [\tau_{\dom}].
\end{equation}
\end{remark}

We can now state our  main result.

\begin{theorem} \label{T:main}
Let $d \ge 3$, $p>1$ and $\lambda>0$. Let $\dom \subset \rd$ be a bounded  open domain such that each boundary point $x\in \partial \dom$ is regular for  $\dom$. Suppose that there exists a partition $D_1, D_2\subset D$, with $D_1\subset\subset D$ such that there exist positive constants $m$ and $M$ such that  $m \le M$ satisfying the following conditions
\begin{equation}\label{Cond1}
\sup_{x\in D} \Ex [\tau_{\dom}] \quad \le \quad \frac{M^{1-p}}{\lambda},
\end{equation}

\begin{equation}\label{Cond2}
    \inf_{x\in D_1} \Ex \left [ \int_0^{\tau_{\dom}} \mathbf{1}_{\{ W_s \in D_1 \}}\diff s \right ] \quad \ge \quad \frac{m^{1-p}}{\lambda},
     \end{equation}

\begin{equation}\label{Cond3}
    M\,\, \sup_{x\in D_2} \Ex \left [ \tau_{\dom} \right ] \,\, \left ( \sup_{x\in D_2} \Ex \left [ \int_0^{\tau_{\dom}} \mathbf{1}_{\{ W_s \in D_2 \}}\diff s \right ]  \right )^p \quad \le \quad  \left (\frac{m}{\lambda} \right )^p,
     \end{equation}
     
 then (\ref{problema}) has a positive solution $u \in C_b^2(\dom) \cap C(\overline{\dom})$ such that \begin{equation}\label{C:u}
u\quad \ge \quad m \quad >\quad 0\quad \quad in\quad D_1,  \quad \text{ and } \quad \quad \quad \quad \norm{u}\quad \le \quad M.
\end{equation}
\end{theorem}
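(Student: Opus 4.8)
The plan is to recast \eqref{problema} as a fixed point problem and apply Schauder's theorem. By \cref{L:IntEq} and \cref{P:Poisson-f}, taking $h(y)=\lambda y^p$, solving \eqref{problema} amounts to finding a positive fixed point of the nonlinear operator
\[
Tu(x) \defto \Ex\left[\int_0^{\tau_{\dom}} h(u(W_t))\,\diff t\right] = \lambda\,\Ex\left[\int_0^{\tau_{\dom}} u^p(W_t)\,\diff t\right], \qquad x\in\dom,
\]
acting on nonnegative continuous functions that vanish on $\partial\dom$. I would work in the Banach space $C(\bar\dom)$ with the sup-norm and look for a fixed point inside the closed, convex, bounded set
\[
K \defto \left\{ u\in C(\bar\dom) \,:\, 0\le u\le M \text{ on } \dom,\ u\ge m \text{ on } D_1,\ u=0 \text{ on } \partial\dom \right\},
\]
which is precisely the set of functions satisfying the target bounds \eqref{C:u}. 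Observe that $T$ is monotone ($0\le v\le w$ implies $Tv\le Tw$, since $y\mapsto y^p$ is increasing on $\mathbb{R}_+$), preserves nonnegativity, and maps into functions vanishing on $\partial\dom$.

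The first and main step is to verify the invariance $T(K)\subseteq K$, and this is where the three hypotheses enter. For the upper bound, for any $u\in K$ and $x\in\dom$,
\[
Tu(x) \le \lambda M^p\,\Ex[\tau_{\dom}] \le \lambda M^p \sup_{x\in\dom}\Ex[\tau_{\dom}] \le M,
\]
the last inequality being exactly \eqref{Cond1}. For the interior lower bound, for $x\in D_1$ I would discard the nonnegative contribution of the excursions into $D_2$ and retain only the occupation of $D_1$, where $u\ge m$:
\[
Tu(x) \ge \lambda m^p\,\Ex\left[\int_0^{\tau_{\dom}} \mathbf 1_{\{W_s\in D_1\}}\,\diff s\right] \ge \lambda m^p \inf_{x\in D_1}\Ex\left[\int_0^{\tau_{\dom}}\mathbf 1_{\{W_s\in D_1\}}\,\diff s\right]\ge m,
\]
which is \eqref{Cond2}. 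The delicate point is the mutual consistency of these two bounds together with the ordering $m\le M$: because the occupation time of $D_1$ is pathwise dominated by the exit time, the interior lower bound and the global upper bound pull $m$ and $M$ in opposite directions, and condition \eqref{Cond3}—which involves only the complementary region $D_2$, through $\sup_{x\in D_2}\Ex[\tau_{\dom}]$ and $\sup_{x\in D_2}\Ex[\int_0^{\tau_{\dom}}\mathbf 1_{\{W_s\in D_2\}}\diff s]$—should control the contribution of the boundary layer $D_2$ so that the two requirements can be met simultaneously and $K$ is nonempty. Making this reconciliation precise, i.e. showing that \eqref{Cond1}--\eqref{Cond3} genuinely force $T(K)\subseteq K$ with $m\le M$, is the crux of the argument and the step I expect to be hardest.

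Next I would check that $T$ is continuous on $K$ and that $T(K)$ is relatively compact in $C(\bar\dom)$. Continuity follows from dominated convergence: if $u_n\to u$ uniformly on $\bar\dom$ then $u_n^p\to u^p$ uniformly, with common integrable majorant $\lambda M^p\tau_{\dom}$ (integrable thanks to \cref{R:1}), so $Tu_n\to Tu$ uniformly. For compactness I would use that $Tu$ is the Green potential of the uniformly bounded, Hölder source $\lambda u^p$; interior elliptic (Schauder) estimates for $\half\Delta(Tu)=-\lambda u^p$ give a uniform $C^{1,\alpha}_{loc}$ bound, hence equicontinuity of $T(K)$ on compact subsets of $\dom$, while the regularity of every $x\in\partial\dom$ (the standing hypothesis of \cref{T:main}) together with \cref{P:Poisson-f} yields equicontinuity up to the boundary, where all functions vanish. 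Arzelà--Ascoli then gives relative compactness of $T(K)$ in $C(\bar\dom)$; obtaining equicontinuity uniformly up to the regular boundary is the second technical obstacle.

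With $K$ nonempty, closed, bounded and convex, $T(K)\subseteq K$, $T$ continuous and $T(K)$ relatively compact, Schauder's fixed point theorem yields $u\in K$ with $Tu=u$. Since $h(y)=\lambda y^p$ is $C^1$ on the relevant range and each boundary point is regular, the converse part of \cref{L:IntEq} shows that $u\in C_b^2(\dom)\cap C(\bar\dom)$ solves \eqref{problema}, and membership $u\in K$ delivers the bounds \eqref{C:u}, in particular $u\ge m>0$ on $D_1$. Strict positivity throughout $\dom$ then follows from the strong maximum principle: $u$ is superharmonic ($\Delta u=-\lambda u^p\le 0$) and not identically zero, hence $u>0$ in all of $\dom$. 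Overall I expect the genuine difficulty to be concentrated in the invariance step—showing that \eqref{Cond1}--\eqref{Cond3} are compatible and jointly force $T$ to preserve both the lower bound on $D_1$ and the upper bound on $\dom$—rather than in the now-standard continuity, compactness, and elliptic bootstrap arguments.
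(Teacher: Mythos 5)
Your proposal is correct in outline and follows the same skeleton as the paper (translate to the integral equation via \cref{L:IntEq}, define $T$, find an invariant convex set, apply Schauder), but your choice of invariant set is genuinely different and, in my view, cleaner. The paper works with the set $\B$ defined by \emph{integral} constraints $\inf_{y\in D_1}L_D^u[D_1](y)\ge m$ and $\sup_{y\in D_2}L_D^u[D_2](y)\le m$, and verifies invariance through a chain of estimates involving Jensen's inequality; condition \eqref{Cond3} is needed there precisely to propagate the second (smallness-on-$D_2$) constraint, which is what permits the localisation used later for the multiplicity result. You instead use the \emph{pointwise} set $K=\{0\le u\le M,\ u\ge m \text{ on } D_1,\ u=0 \text{ on }\partial\dom\}$, and your two displayed inequalities already close the invariance step: the upper bound is \eqref{Cond1}, and for $x\in D_1$ discarding the occupation of $D_2$ gives $Tu(x)\ge\lambda m^p\,\Ex\bigl[\int_0^{\tau_\dom}\mathbf 1_{\{W_s\in D_1\}}\diff s\bigr]\ge m$ by \eqref{Cond2}. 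So the "reconciliation" you flag as the unresolved crux is not actually an obstacle --- there is nothing left to reconcile, since the existence of $m\le M$ satisfying the hypotheses is assumed, $K$ is nonempty by Urysohn (as $D_1\subset\subset D$), and \eqref{Cond3} simply plays no role in your argument. What each approach buys: yours is elementary (only monotonicity of $y\mapsto y^p$), avoids the paper's delicate applications of Jensen to non-normalised measures, and delivers the pointwise conclusion $u\ge m$ on $D_1$ of \eqref{C:u} directly from $u\in K$ (which does not obviously follow from membership in the paper's $\B$); the paper's formulation, at the cost of using \eqref{Cond3}, keeps the fixed point "concentrated" in $D_1$, which is what \cref{T:multiplicity} exploits. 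The remaining soft spot in both treatments is the precompactness of the image: the paper asserts Arzel\`a--Ascoli without proof, while you correctly identify that interior gradient estimates for the Green potential plus the uniform barrier $0\le Tu\le\lambda M^p\,\Ex[\tau_\dom]$ (a fixed continuous function vanishing on the regular boundary) are what give equicontinuity up to $\partial\dom$; spelling that out would complete your argument.
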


\begin{remark} 
Before presenting the proof let us  comment on the assumptions of \cref{T:main}. 
\begin{itemize}
\item [i)] The  inequality in \eqref{Cond2} implies the existence of a subset $D_1 \subset \dom$ for which the  expected occupation time of a Brownian motion in $D_1$ before leaving $\dom$ is bounded below by a positive constant. This guarantees that the solution is nontrivial.
\item [ii)] The inequality in \eqref{Cond3}  imposes a condition between the  exit time from $D$ starting  on $D_2$ (the complement of $D_1$) and the expected occupation time on $D_2$ before leaving $D$. As a matter of fact, this condition ensures that the fixed point will be small in the complement of $D_1$, i.e. it will be concentrated in $D_1$. In other words, that the iteration procedure can be localised (cf. the multiplicity result \cref{T:multiplicity}).
\item [iii)] The above conditions represent, in some sense, a probabilistic way of saying that the domain $D$ is close to having a hole, i.e. that the domain is close to a topologically nontrivial one and they should be compared to previous sufficient conditions for existence of solutions to 
equation \eqref{problema}, e.g. \cite{Coron1984,Ding1989} or the general result \cite{BahriC1988}. Notice that Ding's result makes clear that a purely topological condition cannot be necessary for the existence of solutions, but rather a combination of topological and geometric features of the domain. In this respect, our probabilistic formulation captures these ideas. More generally, the question of how topological or geometric features can be characterised in probabilistic terms, for example, using occupation and exit times of stochastic processes, is a natural and interesting one. In this respect, we refer to \cite{Rajba2011} and \cite{Greg2011} where strong convexity and simple connectedness are studied from a probabilistic point of view.
\end{itemize}
\end{remark}

\begin{proof} (of \cref{T:main})\\
 
 By Lemma \ref{L:IntEq}, proving the existence of a positive solution for the boundary problem  \eqref{problema} is equivalent to solving the nonlinear integral equation \eqref{EK0}. Let us thus rewrite \eqref{problema} as a fixed point problem $u(x) = (Tu)(x)$  for a well-defined  operator $T$ as described below. 

Let $C(\overline{\dom})$ denote the space of real-valued continuous functions on $\overline{\dom}$, endowed with the sup norm $\norm{\cdot}$. Take a partition $D_1, D_2 \subset D$,  and positive constants $M,m >0$ as in the statement.

For any real-valued function $g$ on $\bar{D}$, and any open set $V \subset D$,  define $L_D^g[V]\,:\, \bar{\dom} \to \mathbb{R}_+$ by

\begin{equation}\label{Eq:Lu1}
L_D^g[V]  \,\, :\,\, x \,\, \mapsto\,\, \Ex \left [ \int_0^{\tau_{\dom}} g(W_s) \mathbf{1}_{\{ W_s \in V \}}\diff s \right ].
\end{equation}

Define $\B$ as the set 
\begin{equation} \label{Def:B}
  \B:= \left \{ u \in C(\overline{\dom});\,\,u(x)=0,\,\, x\in \partial \dom : \,\, i)\, \inf_{y \in D_1} L_{D}^u [D_1](y) \ge m,  \quad ii) \,\sup_{y \in D_2} L_{D}^u [D_2](y) \le m, \quad   \text{and} \quad iii)\,\norm{u} \le\, M\right \}.
  \end{equation}
  Notice that this set is nonempty. Indeed, by Urysohn's separation theorem, there exists a continuous function,  $u$, vanishing at the boundary of $D$, with $u\equiv m$ in $D_1$ and $0\le u\le m$ in $D$. It is further necessary to mollify $u$ to guarantee its smoothness.
  
  Observe that $\mathbf{B}$ is convex and a  closed subset of the space $C(\overline{\dom})$. Therefore, $(\B, \norm{\cdot})$ is a complete metric space.

Let $h(y) = \lambda \, y^p$. Define the operator $T$ on $\B$, for each $u\in \B$, as the mapping $Tu : \bar{\dom} \to \mathbb{R}$, 
whose value at $x \in \dom $ is given by
\begin{equation}\label{Def:Tu}
 (T u)(x) :=  \Ex \left [  \int_0^{\tau_{\dom}} h (u(W_t)) \diff t \right ],
 \end{equation}
and $(Tu)(x) = 0$ for $x \in \partial \dom$.

To prove the statement we will rely on the Schauder fixed point theorem. We will  proceed then in two steps:

First we show that the operator $T$  maps $\B$ into $\B$, and that $T$ is a continuous operator. For this, note that if $u\in \B$ then $Tu \in C(\overline{\dom})$.  Moreover,  since $\norm{u} \le M$ and $p > 1$, it follows that for each $x\in \dom$,
\begin{align*}
|Tu (x)| &\le \lambda \, \Ex \left [ \int_0^{\tau_{\dom}} \norm{u}^p \diff t \right ]\,\,
   \le \,\, \lambda \, M^p \sup_{x\in \dom} \Ex [ \tau_{\dom} ] \,\, \le M,
 \end{align*} 
 where the last inequality follows from the equality in  (\ref{Cond1}). 
 Hence, $\norm{Tu} \le M$.

On the other hand, to prove that 
\[\inf_{y \in D_1} L_{D}^{Tu} [D_1](y) \ge m,\] observe that

\begin{align*}
\Ey \left [ \int_0^{\tau_{\dom}} Tu (W_s) \,\mathbf{1}_{\{W_s \in D_1\}} \diff s \right ] &=  \lambda \, \Ey \left [ \int_0^{\tau_{\dom}} \mathbb{E}_{W_s} \left ( \int_0^{\infty}  \, \mathbf{1}_{\{\tau_D > r\}} u^p (W_r) \diff r \right ) \,\mathbf{1}_{\{W_s \in D_1\}} \diff s \right ] \\
&  \ge \lambda \, \Ey \left [ \int_0^{\tau_{\dom}}  \left \{\mathbb{E}_{W_s} \left ( \int_0^{\infty} \mathbf{1}_{\{\tau_D > r\}}\, u (W_r) \diff r \right ) \right \}^p \,\mathbf{1}_{\{W_s \in D_1 \}}  \diff s \right ] \\
& \ge \lambda \, \Ey \left [ \int_0^{\tau_{\dom}}  \inf_{x\in D_1} \left \{\mathbb{E}_{x} \left ( \int_0^{\tau_D} u (W_r) \diff r \right ) \right \}^p \,\mathbf{1}_{\{W_s \in D_1 \}}  \diff s \right ] \\
& \ge \lambda \, \inf_{x\in D_1} \left \{\mathbb{E}_{x} \left ( \int_0^{\tau_D} u (W_r) \diff r \right ) \right \}^p \,\Ey \left [ \int_0^{\tau_{\dom}}   \,\mathbf{1}_{\{W_s \in D_1 \}}  \diff s \right ] \\
& \ge  \lambda \, \inf_{x\in D_1} \left \{\mathbb{E}_{x} \left ( \int_0^{\tau_D} u (W_r) \mathbf{1}_{\{W_r \in D_1 \}}  \diff r \right ) \right \}^p \,\Ey \left [ \int_0^{\tau_{\dom}}   \,\mathbf{1}_{\{W_s \in D_1 \}}  \diff s \right ] \\
& \ge \lambda \, m^p \, \Ey \left [ \int_0^{\tau_{\dom}}  \mathbf{1}_{\{W_s \in D_1 \}}  \diff s \right ] \\
& \ge \lambda \, m^p \, \inf_{y\in D_1}\Ey \left [ \int_0^{\tau_{\dom}}  \mathbf{1}_{\{W_s \in D_1 \}}  \diff s \right ] \ge m
\end{align*}
where we have used Jensen's inequality, inequality $i)$ in the definition of $\mathbf{B}$, and condition \eqref{Cond2}. Taking the infimum over $y \in D_1$ in the previous inequality we obtain the desired result.

It remains to be proved that
\[ \sup_{y \in D_2} L_{D}^{Tu} [D_2](y) \le m.\]
\begin{align*}
\Ey \left [ \int_0^{\tau_{\dom}} Tu (W_s) \,\mathbf{1}_{\{W_s \in D_2\}} \diff s \right ] &=  \lambda \, \Ey \left [ \int_0^{\tau_{\dom}} \mathbb{E}_{W_s} \left ( \int_0^{\infty} \mathbf{1}_{\{\tau_D > r\}} u^p (W_r) \diff r \right ) \,\mathbf{1}_{\{W_s \in D_2\}} \diff s \right ] \\
& \le \lambda \, \Ey \left [ \int_0^{\tau_{\dom}}  \left \{\mathbb{E}_{W_s} \left ( \int_0^{\infty} \mathbf{1}_{\{\tau_D > r\}}\, u (W_r) \diff r \right ) \right \}^{1/p} \,\mathbf{1}_{\{W_s \in D_2 \}}  \diff s \right ] \\
& \le \lambda \, \Ey \left [ \int_0^{\tau_{\dom}}  \left \{ \sup_{x\in D_2}\mathbb{E}_{x} \left ( \int_0^{\tau_D} u (W_r) \diff r \right ) \right \}^{1/p} \,\mathbf{1}_{\{W_s \in D_2 \}}  \diff s \right ] \\
& \le \lambda \, \left \{ \sup_{x\in D_2} \mathbb{E}_{x} \left ( \int_0^{\tau_D} u (W_r) \diff r \right ) \right \}^{1/p} \,\Ey \left [ \int_0^{\tau_{\dom}}   \,\mathbf{1}_{\{W_s \in D_2 \}}  \diff s \right ] \\
& \le \,\lambda \, M^{1/p}  \left \{ \sup_{x\in D_2}\mathbb{E}_{x} \left [ \tau_D \right ] \right \}^{1/p} \,\Ey \left [ \int_0^{\tau_{\dom}}   \,\mathbf{1}_{\{W_s \in D_2 \}}  \diff s \right ] \\
& \le m,
\end{align*}
where we have used Jensen's inequality,  and condition \eqref{Cond3}. Taking the supremum over $y \in D_2$ in the previous inequality we obtain the desired result.
 We can thus conclude that  $T: \B \to \B$, as required.

We now prove that $T$ is continuous. Let $u,v \in \B$ and $x\in \dom$, then  the mean-value theorem yields
\begin{align*}
 |(Tu)(x)  - (Tv)(x)|  &\quad \le\quad  \lambda \, \Ex \left [ \int_0^{\tau_{\dom}} \Big |  u^p (W_t)  - v^p (W_t) \Big |  \diff t \right   ]  \\
 & \quad \le \quad \,K\, \mathbb{E}_x \left [ \int_0^{\tau_{\dom}}   \Big |  u (W_t)  - v (W_t) \Big | \diff t \right   ], 
\end{align*}
where $K := \lambda \, p M^{p-1}$, since $u^p$ is smooth and we can take the Lipschitz  constant as the maximum of its derivative. Hence,
\begin{align*}
 \norm{T (u)  - T (v)} 
 & \quad \le \quad  \lambda \, p M^{p-1} \norm{u - v } \sup_{x\in \dom}\Ex  \left [\tau_{\dom}   \right   ]. 
\end{align*}
By \cref{R:1}, we obtain that $ \norm{T(u)-T(v)} \le  C \, \norm{u-v}$, for all $u,v \in \B$, where $C = \lambda \, p M^{p-1}\sup_{x\in \dom}\Ex  \left [\tau_{\dom}   \right   ] $ is a finite constant independent of $u$ and $v$. Hence, $T$ is continuous. In fact, by the Arzela-Ascoli theorem, $T(\textbf{B})$ is precompact.  Therefore, by Schauder's fixed point theorem\footnote{
Schauder's Fixed point theorem:
\emph{Let $X$ be a Banach space, $M\subset X$ be  non-empty, convex, and closed, and $T: M \subset X \mapsto  M$ be a continuous operator such that $T(M)$ is precompact. Then $T$ has a fixed point.} }, $T$ has a fixed point $u_* \in \B$, as required.

\end{proof}

\section{Applications}
In what follow we present two examples. In the first, we consider the case of the ball and $p>2$, which includes most values of the critical Sobolev exponent ($d>6$).
We show that, at least for a sufficiently large radius, the hypotheses of \cref{T:main} cannot hold, since by Pohozaev's identity the only solution is trivially identically equal to zero. 
In the second example for the case of an annulus it is shown that the conditions of the main theorem hold, provided the inner and outer radii are suitably chosen. This recovers Coron's result (see \cite{Coron1984}) and shows that our result is nonempty.

\textbf{Example 1.} Consider the case where the domain is  $D = B_{T}(0) \subset \rd$, $p>2$, and $d\ge 3$. It is known that in this case, the only solution to \eqref{problema} is the trivial one. Hence, we expect that, for consistency, at least one of the conditions in \cref{T:main} would not be satisfied. Indeed, at least for $T$ sufficiently large we can prove that \eqref{Cond1} and \eqref{Cond2} hold, but \eqref{Cond3} does not. 

Let $ 0 <r < R$. Take $D_1 \equiv B_R \setminus B_r$ and $D_2 \equiv D_1^c$. It is not difficult to see that \eqref{Cond1} holds with $M := \left( \lambda \, T^2 /d \right )^{\frac{1}{1-p}}$; and by the Krylov-Safanov inequality, there exists some $K(T,R,r,d)>0$ such that \eqref{Cond2} holds with $m  = (\lambda \, K)^{\frac{1}{1-p}}$. 

Now suppose that \eqref{Cond3} holds, then note that
\begin{align*}
 \left ( \frac{m}{\lambda} \right )^p &\ge M\,\, \sup_{x\in D_2} \Ex \left [ \tau_{\dom} \right ] \,\, \left ( \sup_{x\in D_2} \Ex \left [ \int_0^{\tau_{\dom}} \mathbf{1}_{\{ W_s \in D_2 \}}\diff s \right ]  \right )^p \\ &=    \frac{M^{2-p}}{\lambda}  \, \left ( \sup_{x\in D_2} \Ex \left [ \int_0^{\tau_{\dom}} \mathbf{1}_{\{ W_s \in D_2 \}}\diff s \right ]  \right )^p  \\
 &\ge  \frac{M^{2-p}}{\lambda}  \left ( \inf_{x\in D_1} \Ex \left [ \int_0^{\tau_{\dom}} \mathbf{1}_{\{ W_s \in D_1 \}}\diff s \right ] \right )^p\\
 &\ge \frac{M^{2-p}}{\lambda} \left ( \frac{m^{1-p}}{\lambda} \right)^p. 
\end{align*}
Hence,
\begin{equation}
m^p \ge \frac{M^{2-p}}{\lambda} m^{p-p^2}
\end{equation}
and thus
\begin{equation}
m^{p^2}  \ge \left (\frac{\lambda \,T^2}{d}\right)^{\frac{2-p}{1-p}}\frac{1}{\lambda} = \left (\frac{T^2}{d}\right)^{\frac{2-p}{1-p}}\lambda^{\frac{1}{1-p}}.
\end{equation}
Therefore, since $p>2$ and $d\ge 3$, the inequality above does not hold if one takes $T$ sufficiently large.  \\

Notice that the estimates we are providing are certainly not optimal. A careful study with $p$ equal to the critical Sobolev exponent should provide a proof for any ball, independently of its size. Moreover, in principle, by Pohozhaev's identity, these consideration should extend to any starshaped domain.

\textbf{Example 2.} Take positive constants  $\delta, r, R$ and $T$   satisfying $0 < \delta < r < R < T$. Also, let us fix $1<p<\sqrt{2}$ and $d=3$. 
Consider now the domain given by the annulus $D \equiv A(\delta, T) \defto B_T \setminus B_{\delta}$, and take the partition $D_1$ and $D_2$ as in Example 1. Besides, we will require that $D_2$ contains all the $x$ such that
$$
|x|= (\delta T (\delta+ T)/2)^{1/3}.
$$
Observe that $D_1 = A(r,R)$ and $D_2 = A(\delta,r) \cup A(R,T)$. We need to show that the conditions \eqref{Cond1}-\eqref{Cond3} in \cref{T:main} hold for this case. 

Indeed, to obtain $M$ in condition \eqref{Cond1}, 
one can prove that\footnote{In particular, note that when $\delta \to 0$, then we recover the result for the ball $B_T(0)$.}

\begin{align}\label{Ex-A3}
\mathbb{E}_{x} (\tau_{A(\delta,T)}) 
&=\frac{1}{3}\left (  T^2 + T\delta + \delta^2 -\frac{\delta T (\delta +T)}{|x|}  - |x|^2  \right ), \quad
\quad x \in A(\delta,T). 
\end{align}

Define, for $z \in [r,R]$, the mapping $ f(z) = \frac{1}{3}\left (  T^2 + T\delta  + \delta ^2 -\frac{\delta T (\delta +T)}{z}  - z^2  \right ) $. Then
\begin{equation}\label{z-opt}
f'(z) = \frac{1}{3}\left ( \frac{\delta T (T+ \delta )}{z^2} - 2z \right ) = 0\quad  \text{ if and only if } \quad  z^3 = \frac{\delta T (T+\delta )}{2}.
\end{equation}

A direct calculation shows that this condition determines the maximum of $f$ and so we obtain
\begin{equation}
\sup_{x\in A(\delta,T)} \mathbf{E} [\tau_{A(\delta,T)}] = \frac{T^2 + T\delta + \delta^2}{3} - \frac{1}{3}\left ( \frac{\delta T (T+ \delta)}{2}\right)^{2/3}.
\end{equation}
Hence, condition \eqref{Cond1} holds with 
\begin{align*}
M \equiv M(\delta,T,p) &= \left ( \frac{T^2 + T\delta + \delta^2}{3} - \frac{1}{3}\left ( \frac{\delta T (T+ \delta)}{2}\right)^{2/3} \right )^{\frac{1}{1-p}}. 
\end{align*}

As before, the second condition in \eqref{Cond2} holds with $m= K(\delta,r,R,T)^{\frac{1}{1-p}}$ for some positive constant $K$ thanks to the Krylov-Safanov inequality. 

It remains to prove that \eqref{Cond3} holds.

Since $D_2$ contains the points $x \in A(\delta,T)$ for which the supremum in \eqref{Ex-A3} is achieved, it follows that 
\begin{align*}
\sup_{x\in D_2} \mathbf{E} [\tau_D]&= M^{1-p}.
\end{align*}
Therefore, using that

\begin{align*}
    M\,\, \sup_{x\in D_2} \Ex \left [ \tau_{\dom} \right ] \,\, \left ( \sup_{x\in D_2} \Ex \left [ \int_0^{\tau_{\dom}} \mathbf{1}_{\{ W_s \in D_2 \}}\diff s \right ]  \right )^p \,\, &\le M\,\, \left (\sup_{x\in D_2} \Ex \left [ \tau_{\dom} \right ] \right )^{p+1},  
\end{align*}
it follows that
\begin{align*}M^{(1-p)(p+1) + 1} = M^{2-p^2} =  \left ( \frac{T^2 + T\delta + \delta^2}{3} -\frac{1}{3} \left ( \frac{\delta T (T+ \delta)}{2}\right)^{2/3} \right )^{\frac{2-p^2}{1-p}} < (T^2 + T\delta + \delta^2 )^{\frac{2-p^2}{1-p}} \le m^p,
\end{align*}
which holds for $T$ sufficiently large and $p\in (1,\sqrt{2})$.

%%%%%%%%%%%%%%%%%%%%%
As an immediate extension we obtain the following multiplicity result:

\begin{theorem}\label{T:multiplicity}
Let $d \ge 3$ and $p>1$. Let $\dom \subset \rd$ be a bounded  open domain such that each $x\in \partial \dom$ is regular for  $\dom$. Assume that there is a subset $V$ with $s$ connected components $V_1,\ldots, V_s$ in $D$. If for each partition $D_1^i \equiv V_i$ and $D_2^i \equiv V_i^c$, $i=1, \ldots s$,  we can find corresponding $m_k$, $M_k$   satisfying the  inequalities  \eqref{Cond1}-\eqref{Cond3}, then there are at least $2^s-1$ distinct solutions to problem \eqref{Eq:1}.
\end{theorem}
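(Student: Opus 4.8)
The plan is to produce, for every nonempty subset $S\subseteq\{1,\dots,s\}$, a solution $u_S$ that is concentrated on $\Omega_S:=\bigcup_{i\in S}V_i$ and correspondingly small on the remaining components, and then to argue that distinct subsets yield distinct solutions. Since there are exactly $2^s-1$ nonempty subsets (the empty set corresponding to the discarded trivial solution $u\equiv 0$), this gives the claimed count. For each such $S$ I would rerun the Schauder argument of \cref{T:main} almost verbatim, replacing the single partition $(D_1,D_2)$ by $(\Omega_S,\Omega_S^c)$ and working in the fixed-point set
\[
\mathbf{B}_S:=\Big\{u\in C(\overline{\dom}):u|_{\partial\dom}=0,\ \norm{u}\le M_S,\ \inf_{y\in V_i}L_D^u[V_i](y)\ge m_i\ (i\in S),\ \sup_{y\in V_i}L_D^u[V_i](y)\le m_i\ (i\notin S)\Big\}.
\]
Nonemptiness (Urysohn plus mollification), convexity and closedness follow exactly as in the proof of \cref{T:main}.

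To check $T(\mathbf{B}_S)\subseteq\mathbf{B}_S$, the global bound $\norm{Tu}\le M_S$ comes from \eqref{Cond1}, and the lower estimates $\inf_{V_i}L_D^{Tu}[V_i]\ge m_i$ for $i\in S$ are obtained componentwise from \eqref{Cond2} by repeating the Jensen-inequality chain of the main proof on each $V_i$. The upper estimates for $i\notin S$ are the delicate point: I would exploit that the $V_j$ are disjoint connected components, so that $V_j\subseteq\Omega_S^c\subseteq V_{i_0}^c$ for any fixed $i_0\in S$; monotonicity of occupation times (the second Remark in \cref{sec:Mresult}) then dominates $\sup_y L_D^{Tu}[V_j](y)$ by the occupation quantity on $V_{i_0}^c$ that is already controlled by \eqref{Cond3} for the component $i_0$. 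This is precisely where nonemptiness of $S$ enters, and it is the reason one obtains $2^s-1$ rather than $2^s$ solutions. After fixing the constants $m_S,M_S$ compatibly with the per-component $m_i,M_i$, continuity and precompactness of $T$ are as before, and Schauder's theorem produces $u_S\in\mathbf{B}_S$; by \cref{L:IntEq} this solves \eqref{problema}, and by \eqref{C:u} it satisfies $u_S\ge m_S>0$ on $\Omega_S$.

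For distinctness, given $S\ne S'$ I would pick $i\in S\,\triangle\,S'$, say $i\in S\setminus S'$. Since $i\in S$ we have $u_S\ge m_S>0$ on $V_i$, whence $\inf_{y\in V_i}L_D^{u_S}[V_i](y)\ge m_S\,\inf_{y\in V_i}\mathbb{E}_y[\int_0^{\tau_D}\mathbf{1}_{\{W_s\in V_i\}}\,ds]$, which is bounded below by a strictly positive constant through \eqref{Cond2}. Since $i\notin S'$, the defining constraint of $\mathbf{B}_{S'}$ forces $\sup_{y\in V_i}L_D^{u_{S'}}[V_i](y)\le m_i$, and comparing these two occupation-time bounds gives $u_S\ne u_{S'}$.

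I expect the genuine obstacle to be exactly this last separation: verifying that the lower occupation bound enjoyed by a solution concentrated on $V_i$ strictly exceeds the upper occupation bound imposed on a solution that avoids $V_i$, which reduces to checking the mutual compatibility of the families $m_i,M_i$ across all $2^s-1$ partitions. One cannot shortcut this by a pointwise argument, because a solution concentrated on $\Omega_S$ need not be pointwise small on $\Omega_S^c$ (its values there are governed by the fixed-point equation and by excursions through $\Omega_S$); the distinctness argument must therefore remain at the level of expected occupation times.
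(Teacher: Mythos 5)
Your proposal follows essentially the same route as the paper's own proof: for each of the $2^s-1$ nonempty unions of components, rerun the Schauder argument of \cref{T:main} on a localized test set, and count. The paper's version differs only in bookkeeping: it works with the aggregate partition $\hat{D}_1^k=\hat{V}_k$, $\hat{D}_2^k=D\setminus\hat{V}_k$ and the single pair of constants $\hat{m}_k:=\max\{m_i: i\in I_k\}$, $\hat{M}_k:=\min\{M_i: i\in I_k\}$, with the constraints in $\B_k$ imposed on $\hat{V}_k$ and its complement as wholes, whereas you keep per-component constraints $\inf_{V_i}L_D^u[V_i]\ge m_i$, $\sup_{V_j}L_D^u[V_j]\le m_j$. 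Both variants need the same unstated verification, namely that \eqref{Cond1}--\eqref{Cond3} for the individual partitions $(V_i,V_i^c)$ imply the corresponding inequalities for the aggregated partition; this does go through by monotonicity of occupation times ($\hat{D}_2^k\subset V_i^c$ for $i\in I_k$) together with the fact that $t\mapsto t^{1-p}$ is decreasing for $p>1$, although one must additionally assume $\max_i m_i\le\min_i M_i$, which the hypotheses do not literally guarantee. The one place where you go beyond the paper is the distinctness of the $2^s-1$ fixed points: the paper simply asserts the count and never separates the solutions, while you attempt a separation via occupation-time bounds and honestly flag that it does not close, since the constraint for $i\in S$ gives $\inf_{V_i}L_D^{u_S}[V_i]\ge m_i$ while the constraint for $i\notin S'$ gives only $\sup_{V_i}L_D^{u_{S'}}[V_i]\le m_i$, with no strict gap between the two. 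So your proposal is at least as complete as the published argument, and your identification of the distinctness step as the genuine obstacle is a fair criticism that applies equally to the paper's proof.
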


\begin{proof}
The proof of this theorem follows directly by applying a similar  argument to the one used in the proof of 
\cref{T:main}.  
 Note that, excluding the empty set (which should not be taken into account, since it corresponds to the zero solution),  there are N:=$2^{s}-1$ possible open subsets $\hat{V}_k  \subset \dom$, $k =1,\ldots, N$, corresponding to a fixed choice of connected components. Define
\[ I_k := \textit{set of indices $i \in \{1,\ldots, s\}$ such that $V_i \in \hat{V}_k$.}\]
For each subset $\hat{V}_k$ we can thus define the partition $\hat{D}_1^k \equiv \hat{V}_k$ and $\hat{D}_2^k= D \setminus \hat{V}_k $. Define the  constants 

\[\hat{m}_k := \max \{ m_i :  i \in I_k\}\quad \quad 
and \quad \quad 
\hat{M}_k := \min \{ M_i :  i \in I_k\}.\]
A fixed point argument can now be used: for each $k=1, \ldots, N$, define the operator $T_k$ as in \eqref{Def:Tu} but now  defined on the test functions 

\begin{align}
      \B_k:=  \Big \{ u \in C(\overline{\dom}),\,\, u(x)=0,\,\, & x\in \partial \dom : \,\, i)\, \inf_{y \in \hat{D}_1^k} L_{D}^u [\hat{D}_1^k](y) \ge \hat{m}_k, \nonumber \\   
      & \quad \quad \quad \quad ii) \,\sup_{y \in \hat{D}_2^k} L_{D}^u [\hat{D}_2^k](y) \le \hat{m}_k,    \quad   \text{and} \quad iii)\,\norm{u} \le\, \hat{M}_k \Big \}. \nonumber
  \end{align}
The proof follows from a similar argument as before implying the existence of a nontrivial fixed point in $\hat{V}_k$, for each $k$, and therefore, excluding the trivial case, there are, at least, $2^s-1$ solutions, as required.
\end{proof}

\section{Conclusions}

In this work we present a probabilistic approach to study the existence of solutions to a type of elliptic equations. We highlight the following relevant observations:

In probabilistic terms, we provided sufficient conditions to guarantee the existence of positive solutions to \eqref{problema}. 
Indeed, the hypotheses on the domain $D$ in Theorem \ref{T:main} can be viewed as a probabilistic characterization of the existence of a  "hole" in $D$. Intuitively, 
it should be contrasted with the sufficient conditions on the topology of $D$ given by Bahri and Coron in  \cite{Coron1984,BahriC1988}, where it is shown that if there is a hole, then a nontrivial solution exists and the result by Ding (\cite{Ding1989} in which a "quasihole" is defined and it is proved that it is a sufficient condition for existence. In the same spirit the multiplicity results in \cite{Passaseo1989}
and \cite{PP1996} can be phrased as guaranteeing as many solutions as the number of  holes (or quasiholes) of the domain. 
\cref{T:main} shows that there exists a nontrivial connection between the topology/geometry of the underlying domain $D$ and expected exit and occupation times of the Brownian motion. This approach should be contrasted with the use of other 
methodological analytical tools such as potential theory and Green's functions.

It is natural to conjecture that conditions similar to the ones given in the main theorem might be also necessary for the existence of a nontrivial solution. Our approach also suggests that there has to be a natural characterisation of topological and geometrical properties of a domain in  probabilistic terms as it is known for \cite{Greg2011,Rajba2011}, where connectedness and simple connectedness are studied. This is reminiscent of the conditions on the capacity of the boundary of the domain of elliptic equations for boundary conditions to be achieved, as well as the probabilistic characterisation of regularity of a boundary point (analytically verified through the Poincare's cone condition) in terms of expected exit times.
There are several natural extensions of the ideas here presented. In the first place, the study of sign-changing solutions
and  multiplicity results along the same lines as \cref{T:multiplicity}  can be dealt with using these probabilistic methods. Additionally, it is possible to introduce a potential in equation \ref{problema} as in a nonlinear Schr\"odinger equation, for which similar probabilistic representation formulas to the ones we consider exist.
Finally, it would be interesting to consider different kinds of differential operators, other than Laplacian, for instance, more general elliptic operators involving different stochastic processes, e.g. the fractional Laplacian and  
operators associated to L\'evy processes. It is natural to conjecture that the conditions will be given in terms of the exit and occupation times of the corresponding processes.

%\section*{Acknowledgements}

\bibliographystyle{plain}
\bibliography{bibfile16Aug2020}

\end{document}